\documentclass[reqno,12pt]{amsart}

\usepackage[dvipsnames]{xcolor}
\usepackage{microtype}

\usepackage{tikz}
\usepackage{tikz-cd}
\usepackage[all]{xy}
\usepackage[utf8]{inputenc}  
\usepackage[english]{babel}
\usepackage[T1]{fontenc}
\usepackage{lmodern}
\usepackage{multicol}
\usepackage{color}
\usepackage{amsmath, amsfonts, amssymb, amsthm}
\usepackage{graphicx}
\usepackage{enumerate}
\usepackage{caption}
\usepackage{geometry}
\usepackage{enumitem}
\usepackage{multirow}
\usepackage{stmaryrd}
\usepackage{url}

\usepackage{bbm}
\usepackage{subcaption}
\usepackage{mathtools}
\usepackage{lipsum}
\usepackage[title,titletoc]{appendix}
\usepackage{booktabs}
\usepackage{here}
\usepackage[colorlinks=true, pdfstartview=FitV, linkcolor=purple, citecolor=blue, filecolor=violet, urlcolor=violet]{hyperref}


\newcommand{\Z}{\mathbb{Z}}

\newcommand{\D}{\mathbb{D}}

\newcommand{\rep}{\operatorname{rep}}
\newcommand{\md}{\operatorname{mod}}
\newcommand{\ind}{\operatorname{Ind}}
\newcommand{\tagg}{\operatorname{tag}}
\newcommand{\T}{\mathcal{T}}
\newcommand{\arrow}{i \rightarrow j}
\newcommand\norm[1]{\left\lvert#1\right\rvert}

\theoremstyle{definition}
\newtheorem{theorem}{Theorem}[section]
\newtheorem{cor}[theorem]{Corollary}

\newtheorem{prop}[theorem]{Proposition}

\newtheorem*{theorem*}{Theorem}

\theoremstyle{definition}
\newtheorem{definition}[theorem]{Definition}

\newtheorem{example}[theorem]{Example}

\theoremstyle{remark}
\newtheorem{remark}[theorem]{Remark}

     
\title[Geometric model for the cluster category of affine type D]{A geometric model for the non-homogeneous tubes of the cluster category of affine type D}
\author{Amandine Favre}
\date{\today}

\address{Faculty of Mathematics, Ruhr-Universit\"at Bochum, Universit\"atsstr. 150, 44801 Bochum, Germany}
\email{amandine.favre@ruhr-uni-bochum.de}

\begin{document}

\begin{abstract}
In this article, we give a geometric model for non-homogeneous tubes of the cluster category of the affine type $D$. This model is given in terms of homotopy classes of unoriented arcs in the twice punctured disk. 
In particular, we extend the geometric model for the tube of rank $n-2$ given in \cite{TypeD} to the two tubes of rank $2$.
\end{abstract}

\maketitle 

\setcounter{tocdepth}{2} 
\tableofcontents

\section{Introduction}
Cluster algebras were introduced in 2002 by Fomin and Zelevinsky in \cite{FZ02}. A cluster algebra is a commutative algebra with generators called cluster variables.
A large class of cluster algebras can be defined from tagged triangulations \cite{FST} where the cluster variables correspond to tagged arcs in the corresponding surface. For example, a cluster algebra of affine type $D$ comes from a tagged triangulation of a twice-punctured disk.

Around 2006, cluster categories were introduced by Buan, Marsh, Reineke, Reiten and Todorov in \cite{BMR+}, and independently by Caldero, Chapoton and Schiffler in \cite{CCS}.
A link were established between cluster algebras and representations of quiver. Objects in a cluster category correspond to cluster variables under the cluster character map also called Caldero–Chapoton map (CC-map).

Various works have been done on the categories of clusters associated with surface triangulations. For example, \cite{CCS} modelled the cluster category of type $A_n$ by a disk with $n+3$ marked points on the boundary or \cite{Schiffler_D} modelled the cluster category of type $D_n$ by a disk with a puncture and $n$ marked points on the boundary.
The cluster category of affine type $D$ arises from a triangulation of a twice-punctured disk and objects correspond to tagged arcs in this surface \cite{BQ,QZ,AP}.

Our view here is that by using arcs, we can model the non-homogeneous tubes of the cluster category of affine type $D$. Similar methods are used by \cite{BM_An} to model the tube categories or by \cite{BZ} to model the cluster category of a surface without punctures.
We note that usually geometric models of cluster categories use unoriented arcs (see \cite{BZ,CCS}) and geometric models of module categories use oriented arcs (see \cite{BM_An}). In this paper, we use unoriented arcs and focus on the cluster category. 

In this article, we complete the geometric model described for example in \cite[Proposition 3.5]{TypeD} for the tube of rank $n-2$ of the cluster category of affine Type $D$ by giving a new model for the two tubes of rank $2$. 
In order to do that we introduce new arcs between the punctures of the twice-punctured disk,calling them interior arcs, and construct a quiver on them. This quiver has two connected components and is isomorphic to the tubes of rank $2$ in the Auslander-Reiten quiver of the cluster category. 


In \cite{jackson}, the author gives another geometric model for the module category of affine type $D$ introducing a new parameter on the arcs, the colour. 
The use of coloured arcs arises to a more complex model to be able to describe all the tubes. 
In this paper, we avoid the use of coloured arcs by describing only the non-homogeneous tubes.

In Section \ref{sec:triang}, we recall notions and notations about triangulations of marked surfaces and their associated quiver, that we will use throughout the article. In Section \ref{sec:categories} we recall some important facts about module and cluster categories of affine type $D$. In Section \ref{sec:n-2}, we study the arcs in the twice-punctured disk and we recall the geometric model for the tube of rank $n-2$ of the cluster category of affine type $D$ given in \cite{TypeD}. In Section \ref{sec:2}, we extend the definition of arcs in order to extend the geometric model to the two tubes of rank $2$.

\section*{Acknowledgements}
I thank my supervisor Karin Baur for suggesting this topic and introducing me to this subject. I thank her for the helpful conversations and her careful reading of my work.

\section{Tagged triangulations and associated quivers}\label{sec:triang}
\subsection{Tagged arcs and triangulations}

In this section, we define the notions of tagged arcs and triangulations, following \cite{FST}.
Let $S$ be a connected oriented Riemann surface with boundary. Let $M$ be a finite set of marked point on the boundary and in the interior of $S$. The marked points in the interior of $S$ are called the \textit{punctures}, the set of all punctures is denoted by $P$ and the pair $(S,M)$ is called a \textit{marked surface}. 
We will first introduce notations for arbitrary surfaces but then $(S,M)$ will be a twice-punctured disk with at least two marked points on the boundary. 

\begin{definition} An \textit{arc} $\gamma$ in $(S,M)$ is a curve in $S$, up to isotopy fixing the endpoints, such that
\begin{itemize}
\item the endpoints of $\gamma$ are marked points in $M$;
\item $\gamma$ does not cross itself (except that its endpoints may coincide);
\item except for its endpoints, $\gamma$ is disjoint from $\partial S$ and $M$; 
\item $\gamma$ does not cut out an unpunctured monogon or an unpunctured digon.
\end{itemize}
\end{definition}
Two arcs are called \textit{compatible} if they do not intersect in the interior of $S$. 

\begin{definition}
A \textit{triangulation} of $(S,M)$ is a maximal collection of pairwise compatible distinct arcs in $(S,M)$. The arcs of a triangulation divide the surface into \textit{triangles}. We label the arcs in a triangulation $T$ by $\{1,...,n\}$. 

A triangle with only two distinct side is called a \textit{self-folded triangle}. The arc inside a self-folded triangle is called its \textit{radius} and the arc that encircles the puncture is called its \textit{loop}.
\end{definition}

To avoid self-folded triangle, we can use tagged arcs and tagged triangulations as in \cite[Section 7]{FST}. For each self-folded triangle, we replace the loop by a tagged arc parallel to the radius, with a "tag" at the puncture. An example of the use of tagged arcs is shown in Figure \ref{Ex:Tagged}.

\begin{definition}
Let $\gamma$ be an arc in $(S,M)$. This arc has two ends. A \textit{tagged arc} is an arc in which each end has been tagged as \textit{unnotched} (plain) or \textit{notched}, so that the following conditions hold: 
\begin{itemize}
\item the arc does not cut out a once punctured monogon;
\item any endpoint on the boundary is tagged unnotched;
\item both ends of a loop are tagged the same way.
\end{itemize}
Two tagged arcs $\gamma_1$, $\gamma_2$ are called \textit{compatible} if all of the following hold: 
\begin{itemize}
\item their untagged versions are compatible;
\item if their untagged version are different and $\gamma_1$ and $\gamma_2$ share an endpoint, then they have the same tag at this endpoint;
\item if their untagged version coincide, then they have the same tag at least at one end.
\end{itemize}
\end{definition}

\begin{definition}
A \textit{tagged triangulation} of $(S,M)$ is a maximal collection of pairwise compatible distinct tagged arcs in $(S,M)$.
\end{definition}

\begin{figure}[h]
\includegraphics[scale=.6]{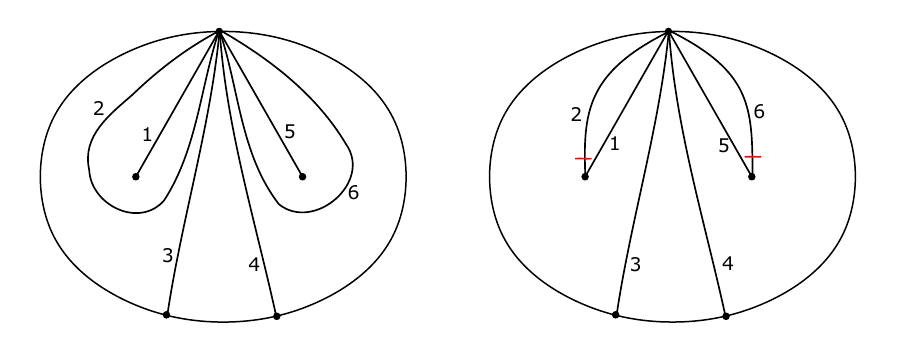} 
\caption{Example of a triangulation and its associated tagged triangulation}\label{Ex:Tagged}
\end{figure}

\begin{remark}
\cite{QZ} introduce arcs as parametrized curves, i.e. as functions $\gamma:[0,1]\rightarrow S$, with $\gamma(0)$, $\gamma(1)\in M$ and $\gamma(t)\in S\setminus M$ for $0<t<1$ up to homotopy fixing the endpoints, and with a tag function defined as $\kappa:\{t \mid \gamma(t)\in P\}\rightarrow \{0,1\}$, where $P\subset M$ is the set of punctures.
\end{remark}

\subsection{Quivers from triangulations}
From a triangulation $T$, we can construct a quiver $Q_T$ corresponding to $T$. It is called the \textit{quiver} of $T$ and is constructed as follows:
\begin{enumerate}
\item for each arc $i$ in $T$ we draw an associated vertex $i$ in $Q_T$;
\item in each non self-folded triangle in $T$, we draw an arrow for each angle of the triangle as follow: let $i$ and $j$ be the two arcs that compose this angle, we draw $\arrow$ if $i$ is followed by $j$ clockwise and $j\rightarrow i$ if $j$ is followed by $i$ clockwise
;
\item in each self-folded triangle in $T$ with radius $i$ and loop $l$, for every $k\in Q_0$ with an arrow $l\rightarrow k$ we draw an arrow $i \rightarrow k$ and for for every $k\in Q_0$ with an arrow $k\rightarrow l$ we draw an arrow $k \rightarrow i$; 
\item we remove all $2$-cycles.
\end{enumerate}

The quiver associated to the triangulation on the left in Figure \ref{Ex:Tagged} is the following: 
\[\begin{tikzcd}[column sep=scriptsize,row sep=tiny]
	1 &&& 5 \\
	& 3 & 4 \\
	2 &&& 6
	\arrow[from=1-1, to=2-2]
	\arrow[from=2-2, to=2-3]
	\arrow[from=2-3, to=1-4]
	\arrow[from=2-3, to=3-4]
	\arrow[from=3-1, to=2-2]
\end{tikzcd}\]


From now on, $(S,M)$ will be a twice-punctured disk with at least two marked points on the boundary.

\section{Cluster and module categories} \label{sec:categories}
Let $k$ be an algebraically closed field and consider an algebra $\Lambda$ of affine type $D$. 
Recall that, for $Q$ a quiver, and $X$ a finite or affine Dynkin diagram, then we say that $Q$ is of type $X$ if $Q$ is mutation equivalent to an acyclic orientation of $X$, for details about quiver mutations, see \cite[Section 8]{FZ2}. By extension, we say that an algebra is of type $X$ if it can be described as the path algebra $kQ$ of a quiver $Q$ of type $X$.
So we can assume that $\Lambda$ is the path algebra $kQ$, where $Q$ is any orientation of the following quiver with $n+1$ vertices:
\[\begin{tikzcd}[sep=tiny]
	1 &&&&&& n \\
	& 3 & 4 & 5 & {(n-2)} & {(n-1)} \\
	2 &&&&&& {(n+1)}
	\arrow[no head, from=1-1, to=2-2]
	\arrow[no head, from=2-2, to=2-3]
	\arrow[no head, from=2-3, to=2-4]
	\arrow[dotted, no head, from=2-4, to=2-5]
	\arrow[no head, from=2-5, to=2-6]
	\arrow[no head, from=2-6, to=1-7]
	\arrow[no head, from=2-6, to=3-7]
	\arrow[no head, from=3-1, to=2-2]
\end{tikzcd}\]


The surface associated to the cluster category of an algebra of affine type $D_n$ is the twice-punctured disk with $n-2$ marked points on the boundary $\D(n)$, see for example \cite[Example 6.10]{FST}. We will label the $n-2$ marked points on the boundary as $\{1,2,...,n-2\}$ counterclockwise, and the two punctures as $P$ and $Q$ as shown in Figure \ref{fig:marked_pts}.

\begin{figure}[h]
\includegraphics[scale=0.7]{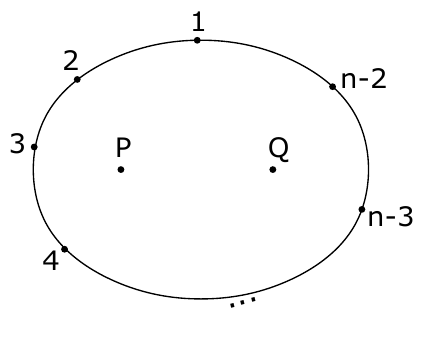} 
\caption{Labelling of the marked points of $\D(n)$} \label{fig:marked_pts}
\end{figure}

\begin{remark}
In this paper, we will use both the term representation of $Q$ and module of $kQ$, depending on the context, since in this case we have an equivalence of categories between the category of finite dimensional $k$-representation of $Q$, $\rep Q$ and the category of modules over the path algebra $kQ$, $\md kQ$ (\cite[Cor. III.1.7]{ASS}).
\end{remark}

\subsection{Cluster category of affine type D} For an algebra $\Lambda$ of affine type $D$, its module category is of infinite type. The module category of $\Lambda$ is Krull-Schmidt and so it can be described by the Auslander-Reiten quiver of the category.
The vertices of the Auslander-Reiten quiver are the isomorphism classes of indecomposable modules, and the arrows are the irreducible morphisms between them. 


The Auslander-Reiten quiver of the module category of an algebra of affine type D is describe as follow: 
\[\Gamma(\md \Lambda)= \mathcal{P}(\Lambda)\cup \mathcal{R}(\Lambda)\cup \mathcal{Q}(\Lambda) \]
where $\mathcal{P}(\Lambda)$ is the postprojective component, $\mathcal{Q}(\Lambda)$ is the preinjective component and $\mathcal{R}(\Lambda)$ is the regular component. 
The regular component $\mathcal{R}(\Lambda)$ consists of a $\mathbb{P}_1(k)$-family of stable tubes, with one tube of rank $n-2$, two tubes of rank $2$ and the remaining ones of rank $1$. These tubes are illustrated in Figure \ref{fig:tubes}.
We will denote by $\T_1$ the tube of rank $n-2$ and by $\T_2$ and $\T_3$ the two tubes of rank $2$. Any module at the mouth of a tube is called a \textit{quasi-simple} module. For more details, see \cite[Chapter XIII]{SS2}. 

The associated cluster category has the same family of tubes, together with the so called transjective component, see \cite{BMR+} for more details. 

\begin{figure}[h]
\includegraphics[scale=0.4]{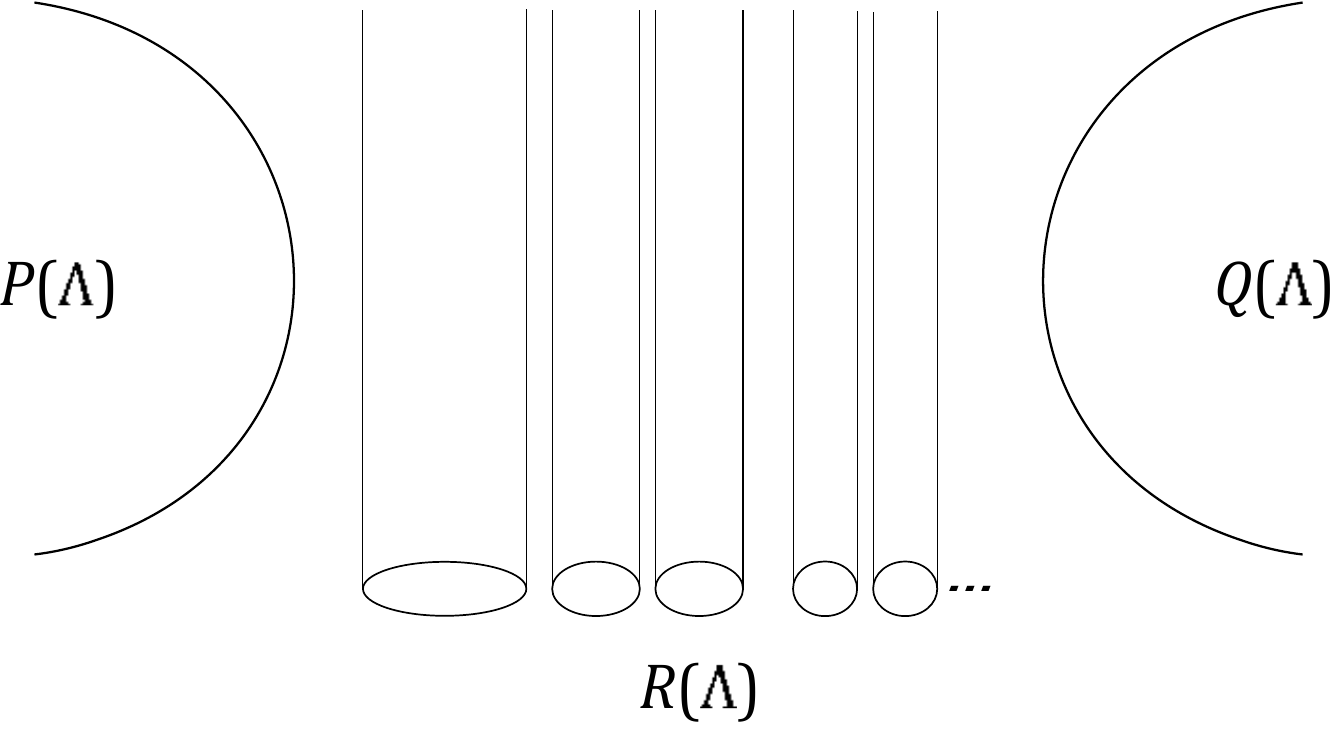} 
\caption{Components of the module category schematically} \label{fig:tubes}
\end{figure}

In Section \ref{sec:n-2}, we will recall a geometric model for the tube $\T_1$ of rank $n-2$, following \cite{TypeD}, and in Section \ref{sec:2} we will extend this model for the two tubes $\T_2$ and $\T_3$ of rank $2$. 


\section{Tube of rank $n-2$ in the cluster category of affine type $D$} \label{sec:n-2}
In this section, we will first describe the arcs in the twice punctured disk. Then we will recall the bijection between the peripheral generalized arcs and the indecomposable modules in the tube of rank $n-2$ of the cluster category of affine type $D_n$, following \cite[Section 3.2]{TypeD}. 

\subsection{Arcs in the twice-punctured disk} We will distinguish four types of arcs in the twice-punctured disk and describe the effect of the Auslander-Reiten translate on the representations associated to the arcs of each types, following \cite[Section 3.2]{TypeD}. 

\begin{definition}
A \textit{generalized arc} is an arc which is allowed to cross itself. 
\end{definition}

\begin{definition}
A \textit{peripheral arc} is a generalized arc $\gamma$ with both endpoints on the boundary, such that $\gamma$ is homotopic to a concatenation of at least two boundary segments. We consider that the arc follow the boundary counterclockwise, so $\gamma$ is defined by its starting marked point $s\in\{1,2,...,n-2\}$, the number of full turns around the boundary $l\in \Z_{\geq 0}$ and its ending marked point $t\in\{1,2,...,n-2\}$. We will denote it by $\gamma^l_{s,t}$.
\end{definition}

In the twice-punctured disk $\D(n)$, the four different types of generalized arcs we consider, are shown in the Figure \ref{fig:type_arcs}.
\begin{enumerate}[label=(\roman*)]
\item peripheral arcs;
\item arcs $\gamma$ with both endpoints on the boundary, such that $\gamma$ is not homotopic to a concatenation of boundary segments; 
\item arcs with one endpoint on the boundary and the other at a puncture; 
\item arcs with both endpoints at punctures.
\end{enumerate}

\begin{figure}[h]
\includegraphics[scale=0.7]{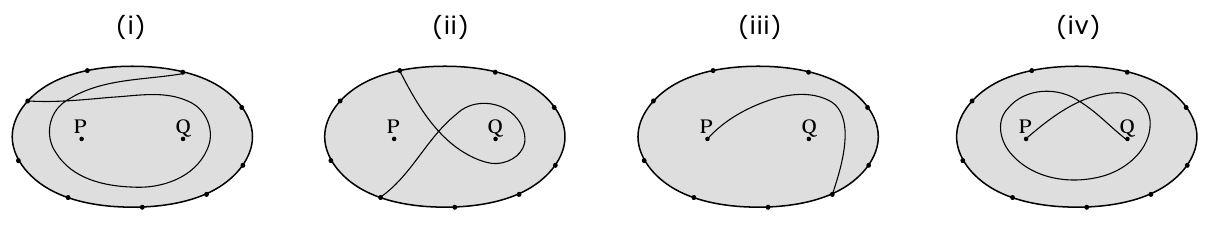} 
\caption{Four types of arcs} \label{fig:type_arcs}
\end{figure}


Brüstle and Zhang describe the effect of the Auslander-Reiten translate on generalized arcs on unpunctured surfaces in \cite{BZ}. For the peripheral arcs in affine type $D$, we can lift the setup to the affine type $A$ to use the results from \cite{BM_An}.
Brüstle and Qui describe the effect of the Auslander-Reiten translate on arcs on punctured surfaces in \cite{BQ}. 

\begin{prop}\label{prop:tau}
\begin{enumerate}
\item {\cite[Proposition 1.3]{BZ}} Let $\gamma$ be a generalized arc in $(S,M)$ of type $(i)$ or $(ii)$, with both endpoints on the boundary. We denote by $M(\gamma)$ the corresponding indecomposable object in the cluster category. Let $\tau$ be the Auslander–Reiten translate. Then the arc associated to $\tau(M(\gamma))$ is obtained by moving both endpoints of $\gamma$ clockwise to the next marked points on the boundary. In particular, if $\gamma=\gamma^l_{s,t}$ is a peripheral arc, we have 
\[\tau(M(\gamma^l_{s,t}))=M(\gamma^l_{s-1,t-1}).\]
\item {\cite[Section 3.2]{BQ}} Let $\gamma$ be an arc with one endpoint at a puncture and the other on the boundary. Then the arc associated to $\tau(M(\gamma))$ is obtained by changing the tag at the puncture and by moving the other endpoint clockwise to the next marked point on the boundary. \label{prop:tau2}
\item {\cite[Section 3.2]{BQ}} Let $\gamma$ be an arc with both endpoints at puncture. Then the arc associated to $\tau(M(\gamma))$ is obtained by changing the tags at the punctures.  \label{prop:end_punct}
\end{enumerate}
\end{prop}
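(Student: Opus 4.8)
The plan is to treat the three parts separately, since they concern three disjoint families of arcs, and for each family the action of $\tau$ has already been computed on an appropriate surface model in the literature; the remaining work is to transport each computation to the twice-punctured disk $\D(n)$ and to read off the stated combinatorial formula. Throughout, the guiding principle is that on a surface cluster category the Auslander--Reiten translate acts as a \emph{rotation}: it shifts each endpoint lying on the boundary by one marked point in the clockwise direction, and toggles the tag of each endpoint lying at a puncture. Each of the three parts is the specialization of this principle to one arc type, so the proof amounts to verifying that the hypotheses of the cited result are met and that the conventions match.

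For part (1), both endpoints of $\gamma$ lie on the boundary, and the description of $\tau$ is the one given by Br\"ustle and Zhang in \cite[Proposition 1.3]{BZ}. Since that result is stated for surfaces \emph{without} punctures, I would first justify its use here. For a type $(ii)$ arc this is done by observing that the relevant picture is local to the two boundary segments carrying the endpoints, so that the clockwise endpoint rotation is unaffected by the two punctures; alternatively one cuts $\D(n)$ along a suitable arc to an unpunctured subsurface on which \cite[Proposition 1.3]{BZ} applies verbatim. For a type $(i)$ peripheral arc $\gamma^l_{s,t}$ I would instead use the lift to affine type $A$ announced before the statement: peripheral arcs only see the boundary, so under this lift they correspond to the arcs winding $l$ times in the annulus model of \cite{BM_An}, on which the tube of the same rank is realized and on which $\tau$ is known to send the endpoints $s,t$ to their clockwise neighbours while leaving the winding number unchanged. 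Transporting this back yields $\tau(M(\gamma^l_{s,t}))=M(\gamma^l_{s-1,t-1})$, with the labels read modulo $n-2$; the winding number $l$ is preserved precisely because $\tau$ moves only the endpoints and not the homotopy class of the interior of the arc.

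Parts (2) and (3) are read off directly from Br\"ustle and Qiu's description of $\tau$ on punctured surfaces in \cite[Section 3.2]{BQ}. For an arc of type $(iii)$, with one endpoint at a puncture and one on the boundary, their rule toggles the tag at the puncture and rotates the boundary endpoint clockwise by one marked point, which is exactly part (2); for a type $(iv)$ arc, with both endpoints at punctures, there is no boundary endpoint to move, and the rule reduces to toggling the tag at each of the two punctures, giving part (3). The main obstacle in carrying all of this out is not any single computation but the bookkeeping of conventions: one must check that the orientation of $\D(n)$ and its counterclockwise labelling of the marked points, the clockwise direction of the rotation, and the direction in which the tags are toggled, are matched consistently across the three sources \cite{BZ}, \cite{BM_An} and \cite{BQ}, and that the affine type $A$ lift for peripheral arcs is set up so that the winding number $l$ and the cyclic wrap-around of the labels $\{1,\dots,n-2\}$ are tracked correctly. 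Verifying this compatibility --- in particular that a type $(ii)$ arc whose interior degenerates toward a puncture matches the type $(iii)$ formula in the limit --- is what makes the three formulas fit together into a single coherent description of $\tau$.
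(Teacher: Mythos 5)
Your proposal matches the paper's treatment: the paper gives no independent proof of this proposition, presenting it as a recollection of \cite[Proposition 1.3]{BZ} for the boundary--boundary arcs (with the lift of peripheral arcs to affine type $A$ via \cite{BM_An} mentioned just before the statement) and of \cite[Section 3.2]{BQ} for the arcs with endpoints at punctures. Your citation-and-convention-matching plan, including the care about the clockwise rotation versus the counterclockwise labelling and the tag toggling, is exactly the implicit content of the paper's appeal to these sources.
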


\begin{remark}
Note that \cite{BQ} does not consider generalized arcs between punctures.
\end{remark}

From this Proposition, we can make the following observation about the periodicity of the Auslander-Reiten translate on the curves:

\begin{cor}[{\cite[Corollary 3.4]{TypeD}}]
Let $M$ be an indecomposable object, such that there exists $\gamma$ with $M(\gamma)=M$. If $M$ is $\tau$-periodic, then $\gamma$ cannot be of type $(ii)$ or of type $(iii)$.
\end{cor}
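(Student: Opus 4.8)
The plan is to prove the contrapositive: I would show that if $\gamma$ is an arc of type $(ii)$ or type $(iii)$, then the object $M(\gamma)$ cannot be $\tau$-periodic. The key tool is Proposition~\ref{prop:tau}, which describes explicitly how $\tau$ acts on the arcs of each type, so the entire argument reduces to tracking the endpoints of $\gamma$ under repeated application of $\tau$ and checking that we never return to the original arc.

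For type $(ii)$, I would invoke part~(1) of Proposition~\ref{prop:tau}: both endpoints of $\gamma$ lie on the boundary and $\tau$ moves each of them clockwise to the next marked point. The essential observation is that a type~$(ii)$ arc is \emph{not} homotopic to a concatenation of boundary segments, so it genuinely separates the two punctures in some way, and rotating both endpoints clockwise around the boundary changes the homotopy class. Concretely, after $k$ applications of $\tau$ both endpoints have advanced by $k$ steps clockwise around the $n-2$ boundary marked points; for the arc to return to itself we would need the winding/crossing data to be preserved, but the rotation strictly changes the relative position of the arc with respect to the two fixed punctures. I expect the cleanest way to formalize this is to note that a type $(ii)$ arc crosses a fixed reference arc between the punctures a definite number of times, and since the endpoints keep rotating, no power of $\tau$ can fix the isotopy class.

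For type $(iii)$, I would use part~(\ref{prop:tau2}): $\gamma$ has one endpoint at a puncture and one on the boundary, and $\tau$ simultaneously flips the tag at the puncture and rotates the boundary endpoint clockwise. Since the boundary endpoint keeps advancing around the $n-2$ boundary points under each application of $\tau$, the boundary endpoint alone already prevents periodicity: to return to the starting arc we would need the boundary endpoint to have returned to its starting marked point, which requires the number of applications to be a multiple of $n-2$, but then one must also check the tag, and the strict motion of the boundary endpoint shows the class is never fixed before returning. In fact the boundary endpoint argument is enough on its own, so type~$(iii)$ is handled by the same rotation principle as type~$(ii)$.

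The main obstacle is making the "rotation changes the homotopy class" step rigorous rather than merely visually plausible, since isotopy classes of self-crossing generalized arcs in a punctured surface can be subtle. I would address this by fixing an auxiliary arc or a system of coordinates (for instance the two punctures together with a chosen boundary-to-puncture reference arc) and recording the intersection numbers of $\gamma$ with this reference system; these intersection numbers are isotopy invariants, and I would verify that advancing the boundary endpoints clockwise strictly increases or otherwise alters at least one such invariant, so that $\tau^k(M(\gamma)) = M(\gamma)$ is impossible for any $k \geq 1$. By contrast, arcs of type $(i)$ (peripheral) and type $(iv)$ (both endpoints at punctures) escape this argument precisely because their $\tau$-orbits can close up, which is consistent with the statement only excluding types $(ii)$ and $(iii)$.
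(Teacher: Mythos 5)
Your overall strategy---prove the contrapositive by tracking the $\tau$-action of Proposition \ref{prop:tau} on the endpoints of each type of arc---is exactly the route the paper intends (the corollary is presented as an immediate observation from that proposition). But your execution has a genuine gap, and it is visible in your own wording. For type $(iii)$ you claim that ``the boundary endpoint argument is enough on its own.'' It is not: the boundary endpoint advances cyclically through the $n-2$ marked points, so it returns to its starting point after $n-2$ applications of $\tau$, and the tag at the puncture returns after any even number of applications; taking $k$ to be a suitable common multiple (e.g.\ $k=n-2$ when $n-2$ is even, or $k=2(n-2)$ in general), both the boundary endpoint and the tag are restored, and your argument terminates without producing a contradiction. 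The same objection undermines your type $(ii)$ discussion: ``the endpoints keep rotating'' cannot by itself be the reason for non-periodicity, because the endpoints of a peripheral type $(i)$ arc rotate in exactly the same way under $\tau$ (Proposition \ref{prop:tau}(1)), and yet the paper explicitly observes that $M(\gamma^l_{s,t})$ \emph{is} $\tau$-periodic of period $n-2$, with $\tau(\gamma^l_{s,t})=\gamma^l_{s-1,t-1}$ closing up after $n-2$ steps. An argument whose stated mechanism applies verbatim to type $(i)$ proves too much; your closing remark that types $(i)$ and $(iv)$ ``escape this argument because their orbits can close up'' is an observation about the conclusion, not a feature of your mechanism.

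The missing idea is the invariant that actually separates types $(ii)$ and $(iii)$ from type $(i)$: winding around the pair of punctures. Dragging an endpoint once completely around the boundary ($n-2$ applications of $\tau$) replaces $\gamma$ by its composition with a full boundary loop. For a peripheral arc this loop is absorbed, since the arc is homotopic into a neighbourhood of the boundary; but for a non-peripheral arc with at least one boundary endpoint---precisely types $(ii)$ and $(iii)$---the loop cannot be pushed off the punctures and adds one full wrap around them. Consequently an isotopy invariant such as the minimal intersection number of $\tau^{k(n-2)}(\gamma)$ with a fixed arc joining the two punctures grows linearly in $k$, so no power of $\tau$ fixes the homotopy class. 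Your final paragraph gestures at exactly such intersection-number invariants, but asserts that each single clockwise step ``strictly increases or otherwise alters'' one of them---which is false stepwise (and false at every step for type $(i)$). The monotonicity holds only for the full-turn power $\tau^{n-2}$, and only because non-peripheral arcs cannot be homotoped away from the punctures; making that one statement precise is the entire content of the corollary, and it is the step your proposal leaves open.
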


We also observe that for $\gamma\in (S,M)$ a peripheral arc, we have that $M(\gamma)$ is $(n-2)$-periodic, and for $\delta\in (S,M)$ an arc (not generalized) with both endpoints at punctures, $M(\delta)$ is $2$-periodic. 

\subsection{Tube $\T_1$ of rank $n-2$} We recall the bijection between the peripheral generalized arcs and the indecomposable modules in the tube of rank $n-2$ of the cluster category of affine type $D$, following \cite[Proposition 3.5]{TypeD}.

\begin{prop} \label{Prop:T1}
Let $\ind(\T_1)$ be the set of indecomposable modules in the tube $\T_1$, up to isomorphism. Then we have the following bijection:
\[ \ind(\T_1) \longleftrightarrow \{\text{peripheral generalized arcs }\gamma^l_{s,t}\}, \]
so the indecomposable modules are in bijection with the set of peripheral arcs $\gamma^l_{s,t}$ with $s,t\in\{1,2,...,n-2\}$, $l\in \Z_{\geq 0}$ and $t\neq s,s+1$ if $l=0$. 
In addition, we have that the irreducible maps in the tube correspond to lengthening and shortening the generalized arcs of a marked point on the boundary and that the quasi-simple modules of $\T_1$ correspond to the peripheral arcs $\gamma^0_{s,s+2}$, with $s\in\{1,2,...,n-2\}$, reduced modulo $n-2$. 
\end{prop}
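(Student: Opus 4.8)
The plan is to exhibit the indexing set of peripheral generalized arcs, equipped with the operations of lengthening and shortening at an endpoint and with the translate from Proposition \ref{prop:tau}, as a translation quiver isomorphic to the stable tube $\mathbb{Z}A_\infty/\langle\tau^{n-2}\rangle$ of rank $n-2$, and then to transport this isomorphism across the assignment $\gamma\mapsto M(\gamma)$. Since a standard stable tube is determined as a translation quiver by its rank together with its mouth and mesh (knitting) relations, it suffices to match the mouth and to check that lengthening and shortening reproduce the meshes; the three asserted statements (the bijection, the description of irreducible maps, and the location of the quasi-simples) then follow simultaneously.

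First I would fix coordinates. To a peripheral arc $\gamma^l_{s,t}$ I attach its length $N\ge 2$, the number of boundary segments it is homotopic to, and its starting point $s\in\{1,\dots,n-2\}$ read modulo $n-2$; the data $(s,N)$ is an equivalent bookkeeping for $(s,t,l)$, and for each fixed $N$ there are exactly $n-2$ arcs, one per starting point. I set the quasi-length to be $N-1$, so that the shortest arcs, those with $N=2$, namely $\gamma^0_{s,s+2}$, have quasi-length $1$. By the first part of Proposition \ref{prop:tau}, $\tau$ acts on these arcs by $\gamma^0_{s,s+2}\mapsto\gamma^0_{s-1,s+1}$, that is, as the cyclic shift $s\mapsto s-1$ of order $n-2$; combined with the $(n-2)$-periodicity of $M(\gamma)$ for peripheral $\gamma$ noted above, this identifies $\{M(\gamma^0_{s,s+2})\}_{s}$ as a single $\tau$-orbit of size $n-2$, which I claim is the mouth of $\T_1$. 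For the base identification one may also read off the dimension vector of $M(\gamma^0_{s,s+2})$ directly and recognize it as a quasi-simple regular module.

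Next I would knit up the tube. The two endpoints of an arc furnish the two directions of the mesh: moving the endpoint $s$ back by one marked point lengthens the arc and realizes the irreducible \emph{inclusion} up the tube, while moving the endpoint $t$ back by one marked point shortens it and realizes the irreducible \emph{projection} down the tube. Thus a general peripheral arc has exactly two irreducible maps out of it, one of each kind. At a quasi-simple $\gamma^0_{s,s+2}$ the shortening would produce an arc homotopic to a single boundary segment, excluded precisely by the condition $t\neq s,s+1$ when $l=0$, so only the lengthening survives, reproducing the shape of the mouth. Using the first part of Proposition \ref{prop:tau} I would then verify that for every arc the $\tau$-translate closes the corresponding mesh, so that each mesh of the rank-$(n-2)$ tube is matched by the arcs related by lengthening and shortening at the two endpoints. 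This yields an isomorphism of translation quivers between the arcs and $\mathbb{Z}A_\infty/\langle\tau^{n-2}\rangle=\T_1$, and hence the bijection, the statement on irreducible maps, and the identification of the quasi-simples with $\gamma^0_{s,s+2}$.

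The main obstacle is this knitting step: one must check, compatibly across the whole tube and without collisions, that lengthening at $s$ and shortening at $t$ are exactly the two irreducible maps at each non-mouth vertex, that they interact with $\tau$ so as to close every mesh, and that the degeneracy conditions $t\neq s,s+1$ at $l=0$ remove exactly the sub-mouth degenerate arcs of $0$ and $1$ segments. The accompanying index bookkeeping, verifying that $(s,t,l)$ ranges bijectively over pairs (position, quasi-length) with $n-2$ arcs of each length, is routine, and injectivity of $\gamma\mapsto M(\gamma)$ on this class is guaranteed by the surface-model results of \cite{BZ,BM_An} invoked for Proposition \ref{prop:tau}.
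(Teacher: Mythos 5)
Note first that the paper does not prove this proposition: it is recalled verbatim from \cite[Proposition 3.5]{TypeD}, so there is no in-paper proof to compare against. Your overall strategy --- realize the peripheral arcs as a stable translation quiver, match the mouth, and knit --- is the natural one and surely close in spirit to the cited source. However, as written your knitting rules contain a genuine error: you take the two irreducible maps \emph{out of} $\gamma^l_{s,t}$ to be the clockwise moves, lengthening at $s$ (so $\gamma^l_{s,t}\to\gamma^l_{s-1,t}$) and shortening at $t$ (so $\gamma^l_{s,t}\to\gamma^l_{s,t-1}$). With these conventions the predecessors of $\gamma^l_{s,t}$ are $\gamma^l_{s+1,t}$ and $\gamma^l_{s,t+1}$, while the successors of $\tau(\gamma^l_{s,t})=\gamma^l_{s-1,t-1}$ (Proposition \ref{prop:tau}(1)) are $\gamma^l_{s-2,t-1}$ and $\gamma^l_{s-1,t-2}$; these sets never coincide, so no mesh closes and the structure is not a translation quiver with this $\tau$. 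Equivalently, your conventions would force $\tau:(s,t)\mapsto(s+1,t+1)$, contradicting Proposition \ref{prop:tau}(1). The repair is to orient the outgoing arrows \emph{counterclockwise}, opposite to $\tau$: lengthen at $t$ ($\gamma^l_{s,t}\to\gamma^l_{s,t+1}$) and shorten at $s$ ($\gamma^l_{s,t}\to\gamma^l_{s+1,t}$), which is exactly the pattern visible in Figure \ref{fig:T1}; then predecessors of $\gamma^l_{s,t}$ are $\{\gamma^l_{s,t-1},\gamma^l_{s-1,t}\}$, equal to the successors of $\gamma^l_{s-1,t-1}$, and the meshes close. Since you yourself single out the mesh-closing check as ``the main obstacle'' and then set it up in the orientation in which it fails, this is a substantive flaw, not bookkeeping.

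Second, your base identification of the mouth is asserted rather than proved, and the argument you lean on cannot deliver it: every indecomposable in a tube of rank $r$ is $\tau$-periodic of period $r$ regardless of its quasi-length, so exhibiting $\{M(\gamma^0_{s,s+2})\}_s$ as a $\tau$-orbit of size $n-2$ only places it somewhere in $\T_1$ (and only when $n-2>2$; for $n=4$ all three tubes have rank $2$), not at its mouth. Rigidity would not suffice either, since in a tube of rank $n-2$ the rigid indecomposables comprise all quasi-lengths up to $n-3$, so the dimension-vector computation you defer in one clause is genuinely load-bearing and must be carried out (or replaced by a direct Hom/Ext argument). Finally, be aware that \cite[Proposition 1.3]{BZ} is stated for \emph{unpunctured} surfaces; for peripheral arcs in the twice-punctured disk both the $\tau$-formula and, more importantly, the identification of irreducible maps with endpoint moves require the lift to the annulus of affine type $A$ and the results of \cite{BM_An}, a step the paper points out explicitly and which your citation of \cite{BZ,BM_An} glosses over.
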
 

\begin{example}
In this example, we consider an algebra of affine type $D_7$. A part of the Auslander-Reiten quiver of the tube $\T_1$ in term of arcs is shown in the Figure \ref{fig:T1}.
\end{example}

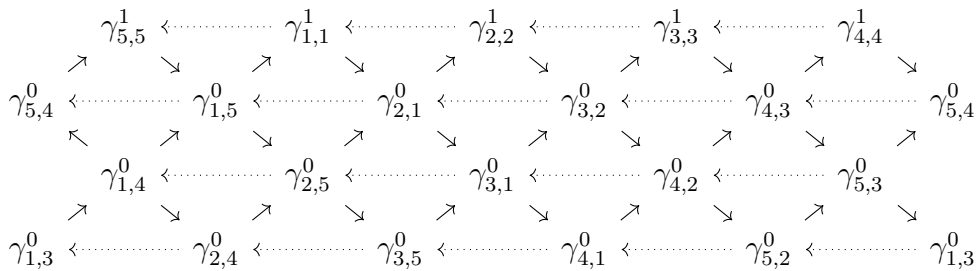
\begin{figure}[h]
\[\begin{tikzcd}[sep=tiny]
	& {\gamma^1_{5,5}} && {\gamma^1_{1,1}} && {\gamma^1_{2,2}} && {\gamma^1_{3,3}} && {\gamma^1_{4,4}} \\
	{\gamma^0_{5,4}} && {\gamma^0_{1,5}} && {\gamma^0_{2,1}} && {\gamma^0_{3,2}} && {\gamma^0_{4,3}} && {\gamma^0_{5,4}} \\
	& {\gamma^0_{1,4}} && {\gamma^0_{2,5}} && {\gamma^0_{3,1}} && {\gamma^0_{4,2}} && {\gamma^0_{5,3}} \\
	{\gamma^0_{1,3}} && {\gamma^0_{2,4}} && {\gamma^0_{3,5}} && {\gamma^0_{4,1}} && {\gamma^0_{5,2}} && {\gamma^0_{1,3}}
	\arrow[from=1-2, to=2-3]
	\arrow[dotted, from=1-4, to=1-2]
	\arrow[from=1-4, to=2-5]
	\arrow[dotted, from=1-6, to=1-4]
	\arrow[from=1-6, to=2-7]
	\arrow[dotted, from=1-8, to=1-6]
	\arrow[from=1-8, to=2-9]
	\arrow[dotted, from=1-10, to=1-8]
	\arrow[from=1-10, to=2-11]
	\arrow[from=2-1, to=1-2]
	\arrow[from=2-3, to=1-4]
	\arrow[dotted, from=2-3, to=2-1]
	\arrow[from=2-3, to=3-4]
	\arrow[from=2-5, to=1-6]
	\arrow[dotted, from=2-5, to=2-3]
	\arrow[from=2-5, to=3-6]
	\arrow[from=2-7, to=1-8]
	\arrow[dotted, from=2-7, to=2-5]
	\arrow[from=2-7, to=3-8]
	\arrow[from=2-9, to=1-10]
	\arrow[dotted, from=2-9, to=2-7]
	\arrow[from=2-9, to=3-10]
	\arrow[dotted, from=2-11, to=2-9]
	\arrow[from=3-2, to=2-1]
	\arrow[from=3-2, to=2-3]
	\arrow[from=3-2, to=4-3]
	\arrow[from=3-4, to=2-5]
	\arrow[dotted, from=3-4, to=3-2]
	\arrow[from=3-4, to=4-5]
	\arrow[from=3-6, to=2-7]
	\arrow[dotted, from=3-6, to=3-4]
	\arrow[from=3-6, to=4-7]
	\arrow[from=3-8, to=2-9]
	\arrow[dotted, from=3-8, to=3-6]
	\arrow[from=3-8, to=4-9]
	\arrow[from=3-10, to=2-11]
	\arrow[dotted, from=3-10, to=3-8]
	\arrow[from=3-10, to=4-11]
	\arrow[from=4-1, to=3-2]
	\arrow[from=4-3, to=3-4]
	\arrow[dotted, from=4-3, to=4-1]
	\arrow[from=4-5, to=3-6]
	\arrow[dotted, from=4-5, to=4-3]
	\arrow[from=4-7, to=3-8]
	\arrow[dotted, from=4-7, to=4-5]
	\arrow[from=4-9, to=3-10]
	\arrow[dotted, from=4-9, to=4-7]
	\arrow[dotted, from=4-11, to=4-9]
\end{tikzcd}\]
\caption{Part of the Auslander-Reiten quiver of the tube $\T_1$}\label{fig:T1}
\end{figure}

\begin{remark}
The objects in the Auslander-Reiten quiver depend on the triangulation $T$. The modules corresponding to the arcs in $\D_n$ are given by the intersection between the triangulation and the arcs shown in the previous example.
\end{remark}

\section{Tubes of rank $2$ in the cluster category of affine type $D$} \label{sec:2} 
In this section, we will give a new description of the two tubes $\T_2$ and $\T_3$ of rank $2$ in terms of generalized arcs. For this purpose, we will study the generalized arcs in the twice-punctured disk $\D(n)$, define a new type of arcs to then determine the arcs which correspond to the objects in these two tubes. 

\subsection{Arcs in the tubes of rank $2$}
We first want to find four possible arcs to correspond to the quasi-simple modules in the tubes $\T_2$ and $\T_3$. 
We recall that the quasi-simple modules are the modules at the mouth of the tubes $\T_2$ and $\T_3$. So there are only four quasi-simple modules in the tubes $\T_2$ and $\T_3$ and these four objects have to be $\tau$-periodic of period $2$ since the two tubes are of rank $2$.
Moreover, these quasi-simple modules are the only rigid modules in these tubes and the rigid modules correspond to arcs without self-crossings, \cite[Theorem 5.5]{QZ}.
Since the peripheral arcs correspond to the objects in the tube $\T_1$ by Proposition \ref{Prop:T1}, the Proposition \ref{prop:tau}(\ref{prop:end_punct}) tells us that the arcs corresponding to the objects in the tubes $\T_2$ and $\T_3$ have both endpoints at the punctures. 
The corresponding arcs are those shown in Figure \ref{fig:PQ_arcs} (cf \cite[Lemma 3.7]{TypeD}). We use these four arcs to construct a geometric model for the two tubes.

\begin{figure}[h]
\includegraphics[scale=0.7]{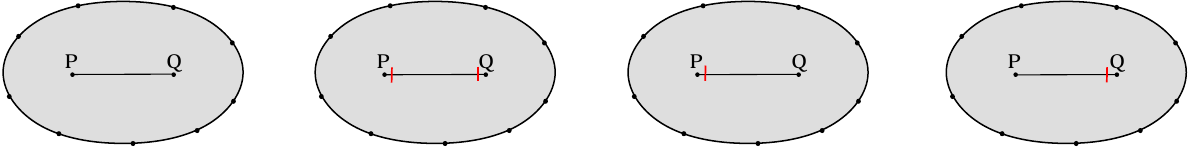}
\caption{Four arcs at the mouth of the tube} \label{fig:PQ_arcs}
\end{figure}

To describes the two tubes, we need to extend the definition of generalised arcs, so we introduce a new class of arcs, which are allowed to cut out one-punctured monogon.
\begin{definition}
An \textit{interior arc} $\gamma$ in $(S,M)$ is a generalized tagged arc allowed to cut out a one-punctured monogon, such that \begin{itemize}
\item both endpoints of $\gamma$ are punctures;
\item if $\gamma$ cut out a one-punctured monogon, then its ends are tagged differently. 
\end{itemize}
\end{definition}

For all interior arcs that are not homotopic to those in Figure \ref{fig:PQ_arcs}, we will view them as winding counterclockwise around the two punctures, associating an orientation to them. 

We introduce a notation for interior arcs: 
Consider a (bounded) cylinder, with one marked point $P$ on the lower boundary and one marked point $Q$ on the upper boundary. We look at this cylinder as a rectangle with the vertical sides identified, and we work in its universal covering space, an infinite strip in the plane. 
It will be convenient to denote the preimages of $P$ by odd number on the lower boundary and the preimages of $Q$ by even number on the upper boundary. 

\begin{figure}[h]
\includegraphics[scale=0.45]{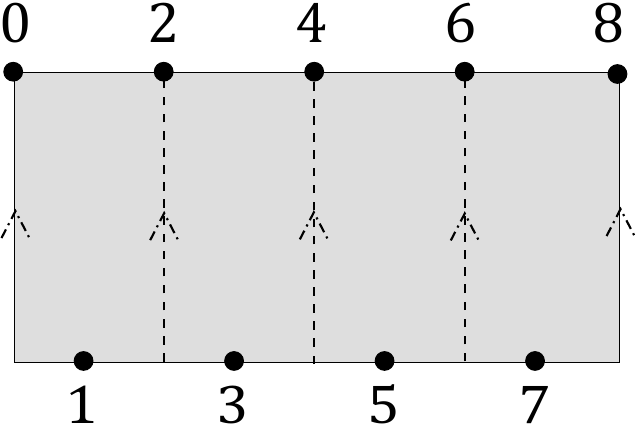} 
\caption{A part of the universal covering space}
\end{figure}

We will use the arcs in this cylinder to construct the arcs in the twice punctured disk with both endpoints at punctures. 
We will denote arcs in the cylinder as $[x,y]$ with $x,y\in\Z\sqcup \Z^\ast$, writing $\ast$ to denote a notched tag at this end.
The arc $\gamma=[x,y]$ in the cylinder connects $x$ and $y$.
Let $n$ be the number of full turns around the cylinder done by $\gamma=[x,y]$. This number of winding $n$ is also the number of times $\gamma$ crosses a dotted line in the universal cover, except the intersections at the endpoints. We observe that $n=\left\lfloor\dfrac{\norm{y-x}}{2}\right\rfloor$.

The arc $\gamma$ in the cylinder will correspond to the arc in the twice-punctured disk that begins at the puncture corresponding to the marked point on the left end of $\gamma$ in the cylinder, and will wind around the punctures following the boundary $n$ times and then ends at the puncture corresponding to the marked point on the right end of $\gamma$ in the cylinder.

\begin{figure}[h]
\includegraphics[scale=0.7]{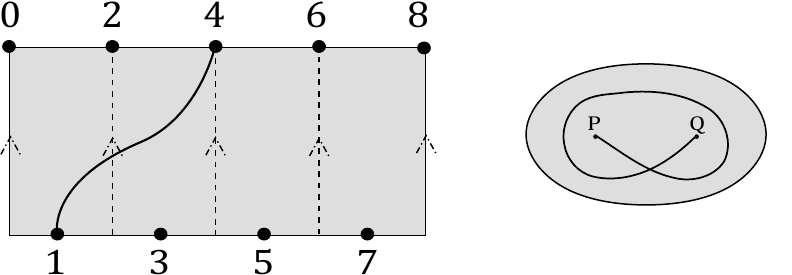} 
\caption{A curve in the cylinder and its corresponding curve in the twice punctured disk}
\end{figure}

For $a\in \Z\sqcup \Z^\ast$, we write $a^\ast$ to indicate the change of tagging at this end of the curve, $\tagg(a)$ for the tag (plain or notched) at $a$ and $\norm{a}$ for the integer $a\in \Z$ regardless to the tagging.
Let $x,y \in\Z\sqcup \Z^\ast$, we observe that:
\begin{enumerate}[label=\arabic*.]
\item the arc $[x,y]$ is the same arc as $[y,x]$. In other term, we only need to know the starting point and the ending point, since the arcs are read from left to right. We will only consider arcs $[x,y]$ with $\norm{x}\leq \norm{y}$;
\item by construction, $[x,y]=[x-2m,y-2m]$ for all $m\in\Z$.
Moreover, the arcs $[x,x]$ and $[x^\ast,x]$ are contractible, so we will only consider arcs $[x,y]$ with $\norm{x}\neq \norm{y}$;
\item let $x<y$ and $a<b$ in $\Z\sqcup \Z^\ast$. Then the arc $[x,y]$, $\norm{x}\not\equiv \norm{y} \mod 2$, i.e., connecting $P$ and $Q$, is equal to the arc $[a,b]$ if and only if $\norm{y}-\norm{x}=\norm{b}-\norm{a}$ and their tags agree by parity.
\end{enumerate}

\begin{example}
The first cylinder of Figure \ref{examples_arcs} illustrates the second observation above. The blue arc is denoted $[1,3^\ast]$, the orange arc is denoted $[2,4^\ast]$ and the red arc is denoted $[5,7^\ast]$. The arcs $[1,3^\ast]$ and $[5,7^\ast]$ are the same and correspond to the arc in the first picture of the twice-punctured disk of Figure \ref{examples_arcs}. The arc $[2,4^\ast]$ is not the same as the two others, it corresponds to the arc in the second picture of the twice-punctured disk of Figure \ref{examples_arcs}.

The second cylinder of Figure \ref{examples_arcs} illustrates the third observation above. The blue arc is denoted $[1^\ast,4]$ and the red arc is denoted $[2,5^\ast]$. These two arcs are the same and correspond to the arc in the third picture of the twice-punctured disk of Figure \ref{examples_arcs}.
\end{example}
\begin{figure}[h]
\includegraphics[scale=.8]{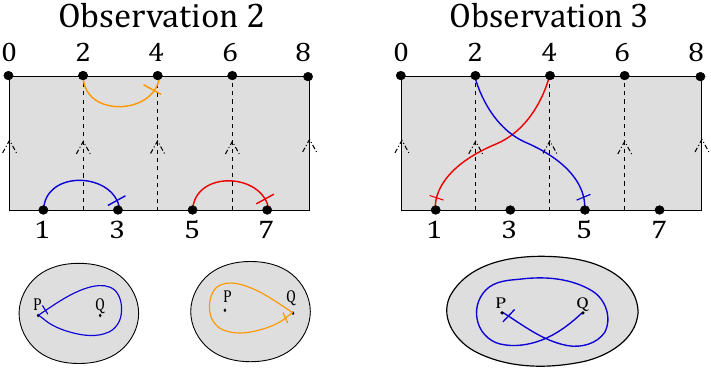} 
\caption{Example of equivalent curves}\label{examples_arcs}
\end{figure}

By the above observations, we can always write an arc $[x,y]$ as follows: 
\begin{enumerate}[label=\alph*.]
\item $x\in\{0,0^\ast,1,1^\ast \}$, $y\in\Z_{>0}\sqcup (\Z_{>0})^\ast$ and $x<y$; \label{conv_a}
\item if $\norm{x}\not\equiv \norm{y} \mod 2$ (i.e. when the arc connects $P$ and $Q$) then: \label{conv_b}
\begin{itemize}
\item if $\tagg(x)=\tagg(y)$, then we write the arc with $\norm{x}=0$;
\item if $\tagg(x)\neq \tagg(y)$, then we write the arc with $\norm{x}=1$.
\end{itemize}
\end{enumerate}

From now on, we will always use the conventions \ref{conv_a} and \ref{conv_b} to describe interior arcs in the twice-punctured disk $\D(n)$.

\subsection{Isomorphism of translation quivers}
Let $Q$ be a quiver, recall that an automorphism $\tau: Q\rightarrow Q$ is called a \textit{translation} if the predecessors of $v$ in $Q$ are equal to the successors of $\tau(v)$ in $Q$ for every vertices $v$. The quiver $Q$ with a translation $\tau$ is called a \textit{stable translation quiver}.

We construct a stable translation quiver with two connected components associated to $\D(n)$, called $\Gamma(\D(n))$.
The vertices of $\Gamma(\D(n))$ are the interior arcs of the twice-punctured disk $\D(n)$ and the arrows are defined as follow. 
Let $[x,y]$ and $[a,b]$ be interior arcs in $\D(n)$ satisfying the conventions \ref{conv_a} and \ref{conv_b}. Then there is an arrow from $[x,y]$ to $[a,b]$ if: 
\begin{itemize}
\item $\norm{x}=0$, $a=x$ and $b=y^\ast+1$ or if $a=x^\ast$ and $b=y-1$;
\item $\norm{x}=1$, $a=x$ and $b=y+1$ or if $a=x^\ast$ and $b=y^\ast-1$.
\end{itemize}
We define a translation map on $\Gamma(\D(n))$ by setting $\tau([x,y])=[x^\ast,y^\ast]$. This map is $2$-periodic.

\begin{prop} \label{Prop:translation}
$\Gamma(\D(n))$ is a stable translation quiver (in the sense of Riedtmann \cite{Riedtmann}) with two connected components and both components are tubes of rank $2$.
\end{prop}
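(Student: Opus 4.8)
The plan is to prove the three assertions in turn---stable translation quiver, two components, rank $2$---by identifying each connected component explicitly with the standard stable tube of rank $2$, i.e. with $\Z A_\infty/\langle\tau^2\rangle$. The organising observation is that the integer $\norm{x}$ is invariant under both arrows and $\tau$: by convention \ref{conv_a} every vertex has $\norm{x}\in\{0,1\}$, each of the four arrow rules sends $[x,y]$ to some $[a,b]$ with $a\in\{x,x^\ast\}$ and hence $\norm{a}=\norm{x}$, and $\tau[x,y]=[x^\ast,y^\ast]$ also fixes $\norm{x}$. Consequently no arrow and no application of $\tau$ links an arc with $\norm{x}=0$ to one with $\norm{x}=1$, so $\Gamma(\D(n))$ is the disjoint union of the full subquivers $\Gamma_0$ on $\{\norm{x}=0\}$ and $\Gamma_1$ on $\{\norm{x}=1\}$, both nonempty. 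This already isolates the two would-be components; it remains to see each is connected and has the right shape.

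First I would check that $\tau$ is a $2$-periodic automorphism of $\Gamma(\D(n))$, so that the quiver is stable in Riedtmann's sense (every vertex lies in the image of $\tau$). It is an involution, since flipping both tags twice is the identity, and it preserves the canonical form of \ref{conv_a}--\ref{conv_b}. That it carries arrows to arrows is a direct check: applying $\tau$ to the source and target of any of the four defining arrows produces, after flipping tags throughout, another instance of the same rules; being an involution, the converse implication is automatic. I would also record the four mouth arcs of Figure \ref{fig:PQ_arcs}, namely $[0,1],[0^\ast,1^\ast]\in\Gamma_0$ and $[1,2^\ast],[1^\ast,2]\in\Gamma_1$, and note $\tau[0,1]=[0^\ast,1^\ast]$ and $\tau[1,2^\ast]=[1^\ast,2]$, so that each component carries a single $\tau$-orbit of size $2$ at its mouth, as a rank $2$ tube demands.

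The computational heart is the translation axiom: for every vertex $v$ the set of sources of arrows into $v$ must equal the set of targets of arrows out of $\tau v$. I would establish this by a finite case analysis on $\norm{x}$ and on the arrow type, the mechanism being that inverting a rule to find the predecessors of $v$ and then comparing with the out-rules at $\tau v$ gives matching sets. A representative case: the predecessors of $[0,2^\ast]$ are $[0,1]$ (via $a=x,\,b=y^\ast+1$) and $[0^\ast,3^\ast]$ (via $a=x^\ast,\,b=y-1$), and these coincide with the targets of the two arrows out of $\tau[0,2^\ast]=[0^\ast,2]$. The one delicate point is the mouth: one of the two formal outputs of a rule there is a contractible arc such as $[0^\ast,0]$, which by the second observation recorded above is not an interior arc and is discarded. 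This pruning is exactly what cuts the bi-infinite shape down to a tube: mouth vertices end up with in- and out-degree $1$, all higher vertices with in- and out-degree $2$.

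Finally I would assemble the tube. Combining \ref{conv_a}--\ref{conv_b} with the defining conditions on interior arcs---above all the rule that an arc cutting out a one-punctured monogon must be notched on exactly one end---one checks that each component has exactly two vertices for each admissible value of $\norm{y}$ (necessarily $\norm{y}\ge\norm{x}+1$): when $\norm{x}\not\equiv\norm{y}\bmod 2$ the two $P$--$Q$ arcs are fixed by convention \ref{conv_b}, and when $\norm{x}\equiv\norm{y}\bmod 2$ the monogon condition discards the two equal-tag arcs and leaves exactly the two unequal-tag ones, which are interchanged by $\tau$. Tracking the arrows then exhibits each component as two columns---the two $\tau$-orbits---joined by up-arrows inside a column and by down-arrows crossing to the other column one level lower, which is precisely the shape of $\Z A_\infty/\langle\tau^2\rangle$; connectedness is then clear, since every vertex descends to the mouth along down-arrows. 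I expect the genuine obstacle to be exactly this last counting step: proving that the interior-arc conditions select two arcs per level is the one topological ingredient, as it amounts to deciding which generalized arcs between the punctures cut out a one-punctured monogon, whereas every other step reduces to bookkeeping with the conventions and the arrow rules.
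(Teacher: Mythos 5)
Your proposal is correct and follows essentially the same route as the paper's proof: decompose $\Gamma(\D(n))$ into $\Gamma_0\sqcup\Gamma_1$ via the invariant $\norm{x}\in\{0,1\}$, verify the translation axiom by matching the predecessors of $[x,y]$ with the successors of $\tau([x,y])=[x^\ast,y^\ast]$ in each component, and deduce rank $2$ from $\tau^2=\mathrm{id}$. You are in fact more careful than the paper at points it leaves implicit --- checking that $\tau$ carries arrows to arrows, pruning contractible arcs such as $[0^\ast,0]$ at the mouth, and the two-arcs-per-level count identifying each component with $\Z A_\infty/\langle\tau^2\rangle$ --- but these are refinements of, not departures from, the published argument.
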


\begin{proof}
We observe that two arcs $[x,y]$ and $[a,b]$ are connected by a sequence of arrows and inverse arrows if and only if $\norm{x}=\norm{a}$, so the quiver $\Gamma(\D(n))$ has two connected components. We will denote these two components as $\Gamma(\D(n))=\Gamma_0\sqcup \Gamma_1$, where $\Gamma_0$ contains the arcs of the form $[0,y]$ and $[0^\ast,y]$, and $\Gamma_1$ contains the arcs of the form $[1,y]$ and $[1^\ast,y]$. 
We observe that $\tau([x,y])=[x^*,y^*]$ so the arcs $[x,y]$ and $\tau([x,y])$ are in the same connected component. 

In the tube $\Gamma_0$, the predecessors of $[x,y]$ are $[x,y^*-1]$ and $[x^*,y+1]$ and the successors of $\tau([x,y])=[x^*,y^*]$ are $[x^*,y+1]$ and $[x,y^*-1]$. So the predecessors of $[x,y]$ coincide with the successors of $\tau([x,y])$ for all $[x,y]$ in $\Gamma_0$.

In the tube $\Gamma_1$, the predecessors of $[x,y]$ are $[x,y-1]$ and $[x^*,y^*+1]$ and the successors of $\tau([x,y])=[x^*,y^*]$ are $[x^*,y^*+1]$ and $[x,y-1]$. So the predecessors of $[x,y]$ coincide with the successors of $\tau([x,y])$ for all $[x,y]$ in $\Gamma_1$.

In both of the connected components, $\tau$ clearly defines a structure of stable translation quiver. 

We observe that $\tau(\tau([x,y]))=[x,y]$ for all $[x,y]$, so both connected components are tubes of rank $2$.
\end{proof}

\begin{example} A part of the translation quiver $\Gamma(\D(n))$.
\[\Gamma_0: \begin{tikzcd}[column sep=small]
	{[0^\ast,3^\ast]} && {[0,3]} && {[0^\ast,3^\ast]} \\
	& {[0,2^\ast]} && {[0^\ast,2]} && {[0,2^\ast]} \\
	{[0,1]} && {[0^\ast,1^\ast]} && {[0,1]}
	\arrow[dashed, no head, from=1-1, to=1-3]
	\arrow[from=1-1, to=2-2]
	\arrow[dashed, no head, from=1-3, to=1-5]
	\arrow[from=1-3, to=2-4]
	\arrow[from=1-5, to=2-6]
	\arrow[from=2-2, to=1-3]
	\arrow[dashed, no head, from=2-2, to=2-4]
	\arrow[from=2-2, to=3-3]
	\arrow[from=2-4, to=1-5]
	\arrow[dashed, no head, from=2-4, to=2-6]
	\arrow[from=2-4, to=3-5]
	\arrow[from=3-1, to=2-2]
	\arrow[dashed, no head, from=3-1, to=3-3]
	\arrow[from=3-3, to=2-4]
	\arrow[dashed, no head, from=3-3, to=3-5]
	\arrow[from=3-5, to=2-6]
\end{tikzcd}\]
\[\Gamma_1: \begin{tikzcd}[column sep=small]
	{[1,4^\ast]} && {[1^\ast,4]} && {[1,4^\ast]} \\
	& {[1^\ast,3]} && {[1,3^\ast]} && {[1^\ast,3]} \\
	{[1^\ast,2]} && {[1,2^\ast]} && {[1^\ast,2]}
	\arrow[dashed, no head, from=1-1, to=1-3]
	\arrow[from=1-1, to=2-2]
	\arrow[dashed, no head, from=1-3, to=1-5]
	\arrow[from=1-3, to=2-4]
	\arrow[from=1-5, to=2-6]
	\arrow[from=2-2, to=1-3]
	\arrow[dashed, no head, from=2-2, to=2-4]
	\arrow[from=2-2, to=3-3]
	\arrow[from=2-4, to=1-5]
	\arrow[dashed, no head, from=2-4, to=2-6]
	\arrow[from=2-4, to=3-5]
	\arrow[from=3-1, to=2-2]
	\arrow[dashed, no head, from=3-1, to=3-3]
	\arrow[from=3-3, to=2-4]
	\arrow[dashed, no head, from=3-3, to=3-5]
	\arrow[from=3-5, to=2-6]
\end{tikzcd}\]
\end{example}



From the previous observations and from Proposition \ref{Prop:translation}, we obtain the following isomorphism that will give us the geometric model for these two tubes:

\begin{theorem}
The translation quiver $\Gamma(\D(n))=\Gamma_0\cup\Gamma_1$ is isomorphic to the disjoint union of the two tubes $\T_1\sqcup\T_2$ of rank two of the Auslander-Reiten quiver of the cluster category of the algebra $\Lambda$ of affine type $D_n$.

\end{theorem}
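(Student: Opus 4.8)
The statement concerns the two rank-$2$ tubes (denoted $\T_2$ and $\T_3$ above). The plan is to build an explicit isomorphism of translation quivers realised by the arc–object correspondence, and then to check that it intertwines the two translations and matches the two families of arrows. First I would define $\Phi$ on vertices by $\Phi([x,y]) = M([x,y])$, the indecomposable cluster-category object attached to the interior arc $[x,y]$. Since the peripheral generalized arcs already exhaust $\T_1$ (Proposition \ref{Prop:T1}), the discussion opening Section \ref{sec:2} together with Proposition \ref{prop:tau}(\ref{prop:end_punct}) shows that the objects of $\T_2$ and $\T_3$ are precisely those whose arcs have both endpoints at the punctures, i.e.\ interior arcs; hence every object of $\T_2 \sqcup \T_3$ equals $M(\gamma)$ for some interior arc $\gamma$ and $\Phi$ is surjective. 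Injectivity would follow from the normal form imposed by conventions \ref{conv_a} and \ref{conv_b}, which attaches a unique symbol $[x,y]$ to each object.

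The first key step is to match the mouths. The four quasi-simple modules are the only rigid objects of $\T_2 \sqcup \T_3$, and by \cite[Theorem 5.5]{QZ} rigid objects correspond to arcs without self-crossings; as explained before the definition of interior arcs, these are exactly the four arcs of Figure \ref{fig:PQ_arcs}. In normal form they are $[0,1]$ and $[0^\ast,1^\ast]$, sitting at the mouth of $\Gamma_0$, and $[1^\ast,2]$ and $[1,2^\ast]$, sitting at the mouth of $\Gamma_1$. I would therefore send the mouth of $\Gamma_0$ to the mouth of one rank-$2$ tube and the mouth of $\Gamma_1$ to the mouth of the other, fixing $\Phi$ on quasi-length $1$ and pairing the two components of $\Gamma(\D(n))$ with $\T_2$ and $\T_3$.

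Next I would verify that $\Phi$ intertwines the translations. Categorically, $\tau$ acts on an object whose arc has both endpoints at punctures by changing the tag at each puncture (Proposition \ref{prop:tau}(\ref{prop:end_punct})); combinatorially, the translation of $\Gamma(\D(n))$ was defined by $\tau([x,y]) = [x^\ast,y^\ast]$, i.e.\ by changing both tags at once. These agree, so $\Phi\circ\tau = \tau\circ\Phi$, and both translations are $2$-periodic, consistent with rank $2$.

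The main step, and the one I expect to be the principal obstacle, is to match the arrows: I must show that the combinatorially prescribed arrows of $\Gamma(\D(n))$ are carried by $\Phi$ to irreducible morphisms of the Auslander–Reiten quiver, and conversely. The arrows of $\Gamma(\D(n))$ come from the lengthening/shortening rules on the symbols $[x,y]$, whereas those of the target are the irreducible maps, so reconciling the two descriptions is the crux. My approach would be to exploit Proposition \ref{Prop:translation}, which already shows that $\Gamma_0$ and $\Gamma_1$ are stable translation quivers that are tubes of rank $2$: by Riedtmann's structure theorem \cite{Riedtmann} such a tube is isomorphic to $\Z A_\infty/\langle\tau^2\rangle$, so its entire arrow structure is determined by its mouth together with $\tau$. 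Having pinned down $\Phi$ on the mouth and established $\tau$-equivariance, I would finish by inducting on the quasi-length, which on the combinatorial side equals $\norm{y}-\norm{x}$, verifying that the two arrows attached to $[x,y]$ land on the two objects forced by the mesh relation of the tube. This exhibits $\Phi$ as a translation-quiver isomorphism on each component, and taking the disjoint union gives $\Gamma(\D(n)) = \Gamma_0\sqcup\Gamma_1 \cong \T_2\sqcup\T_3$, as claimed.
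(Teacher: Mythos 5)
Your decisive step is in substance the paper's own argument: the paper gives no written proof of this theorem, deriving it directly from Proposition \ref{Prop:translation} together with the observations opening Section \ref{sec:2}. That is exactly the second half of your proposal: $\Gamma_0$ and $\Gamma_1$ are stable tubes of rank $2$, any stable tube of rank $2$ is isomorphic to $\Z A_\infty/\langle\tau^2\rangle$ by Riedtmann's structure theorem \cite{Riedtmann}, the four mouth vertices $[0,1]$, $[0^\ast,1^\ast]$, $[1^\ast,2]$, $[1,2^\ast]$ are the four tagged arcs of Figure \ref{fig:PQ_arcs} corresponding to the four quasi-simple objects (rigid objects match arcs without self-crossings by \cite[Theorem 5.5]{QZ}), and the two translations agree, since both act by changing the tags at the punctures (Proposition \ref{prop:tau}(\ref{prop:end_punct})). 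Matching mouths and intertwining $\tau$ then determines the isomorphism up the quasi-length $\norm{y}-\norm{x}$ via the mesh structure, as you say. You also correctly read the target as $\T_2\sqcup\T_3$, silently repairing the misprint $\T_1\sqcup\T_2$ in the theorem's statement.

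However, the first half of your proposal is circular and should be discarded rather than repaired. You define $\Phi([x,y])=M([x,y])$ and derive surjectivity from the claim that every object of the rank-$2$ tubes equals $M(\gamma)$ for some interior arc $\gamma$. But no object $M(\gamma)$ is attached in the literature to a generalized interior arc: the correspondences of \cite{BQ} and \cite{QZ} cover tagged arcs without self-crossings, which here yields only the four quasi-simples, and the paper explicitly remarks that \cite{BQ} does not consider generalized arcs between punctures. Attaching objects to all interior arcs is precisely the content of the theorem — the geometric model is the \emph{output} of the abstract isomorphism, not an input to it. For the same reason, your proposed final verification that the combinatorial arrows ``are carried by $\Phi$ to irreducible morphisms, and conversely'' cannot be run arrow-by-arrow against pre-existing data; the Riedtmann rigidity argument is not merely a convenient shortcut here but the only available route, and it already suffices once the mouths and translations are matched.
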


\bibliographystyle{amsalpha}
\bibliography{biblio}
\end{document}